\documentclass[12pt,a4paper]{article}
\usepackage{mathrsfs}
\usepackage{amsfonts}
\usepackage{cases}
\usepackage{times}\usepackage{latexsym}
\usepackage{graphics}\usepackage{epsfig}
\usepackage{color}
\usepackage{amsthm}\usepackage{setspace}\usepackage{graphicx}\usepackage{float}\usepackage{subfigure}\usepackage{dsfont}\usepackage{fancyhdr}\usepackage{amssymb}

\newtheorem{lemma}{Lemma}[section]

\newtheorem{theorem}{Theorem}[section]

\topmargin=0pt \pagestyle{plain} \raggedbottom \topmargin=-1cm
\oddsidemargin=0mm \textwidth 160mm \textheight 240mm
\begin{document}

\author{Hanbing Liu $^1$ \ \ Peng Hu$^1$ \ \  Ionu\c{t} Munteanu$^2$\\ {\small $^1$ School of Mathematics and Physics of China University of
Geoscience,} \\{\small Wuhan, 430074, P.R.China. E-mail:
hanbing272003@aliyun.com}\\ {\small $^2$ University Al. I. Cuza, Faculty of Mathematics} \\ {\small and Octav Mayer Institute of Mathematics (Romanian Academy), Ia\c{s}i(700506), Romania}}
\title
 {Boundary feedback stabilization of Fisher's equation}
 \date{}
 \maketitle


 \hrule
\begin{abstract}
The aim of this work is to design an explicit finite dimensional boundary feedback controller for locally exponentially stabilizing the equilibrium solutions to Fisher's equation in both $L^2(0,1)$ and $H^1(0,1)$. The feedback controller is expressed in terms of the eigenfunctions corresponding to unstable eigenvalues of the linearized equation. This stabilizing procedure is applicable for any level of instability, which extends the result of \cite{02} for nonlinear parabolic equations. The effectiveness of the approach is illustrated by a numerical simulation.
\end{abstract}

 \noindent
 \keywords{Fisher's equation; local stabilization; boundary feedback controller}

\section{Introduction}
\label{}
We consider the following Fisher's equation
\begin{equation}\label{e101}
 \left\{\begin{array}{ll}
 u_t(x,t)- u_{xx}(x,t)-\alpha u(x,t)+\beta u^2(x,t)=0, \\
 \   \ \ \ \ (x,t)\in (0,1)\times (0,\infty),\\
 u(0,t)=0, u(1,t)=U(t),\ \ \ \ \ \ \ \   \ t\in (0,\infty),\\
  u(x,0)=u_0(x),\ \ \ \ \ \ \ \  \ \ \ x\in(0,1).
\end{array}\right.
\end{equation}
where $\alpha, \beta$ are positive constants; $u(x,t)$ represents the state evolution over the spatial-temporal domain characterized by the coordinates $x, t$, respectively. The scalar function $U(t)$ represents the boundary actuation via the Dirichlet boundary condition. Our object is to stabilize the zero equilibrium with finite dimensional boundary feedback controller.

There has been a large body of remarkable results in recent years on boundary stabilization of linear and nonlinear parabolic systems, and we cite here \cite{01}-- \cite{12}.  The existence of a stabilizing linear boundary feedback controller for the linear parabolic equations with Dirichlet or Neumann boundary conditions was established firstly by R. Triggiani in his pioneering work \cite{01}. The idea was to decompose the controlled system in a finite dimensional unstable part corresponding to unstable eigenvalues and an infinite dimensional stable part. Using Kalman's theory for the finite dimensional part, it was shown in \cite{01} that boundary stabilization is possible via a feedback controller with finite dimensional structure, but only conceptual procedure are provided, instead of an explicit form. V. Barbu firstly introduced a new technique in his work \cite{02, 03, 04} for the construction of a feedback controller, which is finite dimensional and in an explicit form. However, his work based on an strict assumption which requires the normal derivative of the eigenfunctions corresponding to the unstable eigenvalues are linearly independent. This assumption limits the range of application, and it holds true for 1-D heat equation only for low levels of instability.  By following the same approach, I. Munteanu provided explicit stabilizing boundary feedback controllers for the phase-field system in \cite{06}, for the heat equation with fading memory in \cite{07}, and also, for the Navier-Stokes equations with fading memory in \cite{08}. Later on, based on V. Barbu's work, I. Munteanu developed a delicate approach in \cite{09} to the boundary feedback controller which stabilizes the semilinear parabolic equations in $L^2$, and this method dropped the above assumption. In last decade, a different approach, which is the so-called Backstepping technique is developed, and it works efficiently for the 1-D linear parabolic equation on a finite rod, of any level of instability (\cite{10}-\cite{12}).

Recently, X. Yu \emph{et al.} studied the local boundary feedback stabilizing of 1-D Fisher's equation by Backstepping method (\cite{13}), wherein it is proved the feedback controller stabilizes the system in both $L^2(0,1)$ and $H^1(0,1)$, and numerical examples are provided to illustrate the effectiveness. Fisher's equation is a nonlinear parabolic equation firstly proposed by Fisher to model the advance of a mutant gene in an infinite one-dimensional habitat \cite{14}. Moreover, Fisher¡¯s equation has been used as a basis
for a wide variety of models for the spatial spread of gene in population, chemical wave propagation, flame propagation, branching Brownian motion process and even nuclear reactor theory \cite{15}-\cite{18}. It is well known that the uncontrolled Fisher¡¯s equation is unstable.

In this work, for stabilizing the Fisher's equation, we shall adopt the technique in \cite{09} to design a feedback controller, which is different from the backstepping method applied in \cite{13}, but the form is much simpler. In the next section, by following similar arguments as in \cite{09}, we shall prove that the designed finite dimensional boundary feedback controller globally stabilizes the linearized equation
 \begin{equation}\label{e102}
 \left\{\begin{array}{ll}
 u_t(x,t)-u_{xx}(x,t)-\alpha u(x,t)=0,\\ \   \ \ \ \ (x,t)\in (0,1)\times (0,\infty),\\
 u(0,t)=0, u(1,t)=U(t)\ \ \ \ \  \ \ \ \ \ t\in (0,\infty),\\
  u(x,0)=u_0(x)\ \ \ \ \ \ \   \ \ \ x\in(0,1).
\end{array}\right.
\end{equation}
It should be mentioned that it was shown in \cite{09} only the stabilization result of the linearized equation, but for the stabilization of the nonlinear equation, the author referred to \cite{02}. However, we find that, since the structure of the feedback controller is different, the idea to prove the nonlinear stabilization result in Theorem 4.1 in \cite{02} can't be applied directly in our case. In Section 3, we shall prove in detail that this feedback controller locally stabilizes Fisher's equation around zero in $L^2(0,1)$. Hence, for the first time, this work extends completely the work of V.Barbu in \cite{02} for nonlinear parabolic equations to any level of instability. Furthermore, it will be shown that the solution to the closed-loop system is locally exponentially stable in $H^1(0,1)$ when the initial data is in $H^1(0,1)$ and satisfies certain compatibility condition. In Section 4,  Numerical simulations of the closed-loop system will be provided to illustrate the effectiveness of our algorithm. We want to stress that, the method applied in this paper works in all dimensions, although we only considered in this work the 1-D case for the purpose to focus on the idea of algorithm.

\section{Stabilization of the linear equation}
\setcounter{equation}{0}
Define operator $\mathcal{A}:D(\mathcal{A})\rightarrow L^2(0,1), D(\mathcal{A})= H^2(0,1)\cap H^1_0(0,1)$ as
\begin{equation}\label{e103}
\mathcal{A} u=-\partial_{xx} u-\alpha u.
\end{equation}
Notice that $\mathcal{A}$ is self-adjoint. Besides, it can be shown that $\mathcal{A}$ has compact resolvent. Therefore, it has countable set of eigenvalues, denoted by $\{\lambda_j\}_{j=1}^{\infty}$. In fact, by solving the equations
\begin{equation}\label{e104}
 \left\{\begin{array}{ll}
(\lambda_j+\alpha) u(x)-u''(x)=0, \   \ \ \ \ x\in (0,1),\\
 u(0)= u(1)=0,\ \ \ \ \
\end{array}\right.
\end{equation}
$j=1,2, \cdots$, we can find that
\begin{equation}\label{e105}
\lambda_j=(\pi j)^2-\alpha,
\end{equation}
and the corresponding eigenfunctions to $\lambda_j$ is
\begin{equation}\label{e106}
\phi_j(x)=\sin(\pi jx), x\in(0,1).
\end{equation}
Given $\rho>0$, there exists only a finite number of eigenvalues $\{\lambda_j\}_{j=1}^{N}$ with $\lambda_j<\rho, j=1, 2, \cdots, N$, and $\lambda_j\geq \rho$ for all $j\geq N+1$. As we shall see below, the larger $\rho$ is, the faster of the solution to the controlled system exponentially decay is, but on the other hand, the lager dimension, $N$, of the feedback controller is.

In the rest of the paper, we shall denote by $\|\cdot\|$ and $\|\cdot\|_1$ the norms of $L^2(0,1)$ and $H^1(0,1)$ respectively. The inner products in $L^2(0,1)$ and Euclid space $\mathds{R}^N$ will be denoted by $\langle \cdot \rangle$ and  $\langle \cdot \rangle_N$, respectively. We shall write $Q=(0,1)\times (0,\infty)$ for simplicity, and the variables $x,t$ will be omitted in the case of no ambiguity.

Let us denote by $B_0$ the Gram matrix of system $\{\phi_j'(1)\}_{1\leq j\leq N}$, i.e.

$$B_0:=\left(\begin{array}{cccc}\phi_1'(1)\cdot\phi_1'(1) & \phi_1'(1)\cdot\phi_2'(1)\cdots &  \phi_1'(1)\cdot\phi_N'(1)\\ \phi_2'(1)\cdot\phi_1'(1) & \phi_2'(1)\cdot\phi_2'(1)\cdots &  \phi_2'(1)\cdot\phi_N'(1) \\  \cdots & \cdots & \cdots\\ \phi_N'(1)\cdot\phi_1'(1) & \phi_N'(1)\cdot\phi_2'(1)\cdots &  \phi_N'(1)\cdot\phi_N'(1)\end{array}\right).$$

Now, let $\rho<\gamma_1<\gamma_2<\cdots<\gamma_N$ be $N$ constants, which are sufficiently large, such that, for each $j\in\{1, 2, \cdots, N\}$, the solution to the equation
\begin{equation}\label{e202}
\left\{\begin{array}{ll}
\gamma_j\psi_j+\psi''_j-\alpha\psi_j-2\sum_{k=1}^N\lambda_k\langle\psi_j, \phi_k\rangle\phi_k=0,\\ \ \ \ \mathrm{in} \ (0,1),\\
\psi_j(0)=0, \psi_j(1)=V,
\end{array}\right.
\end{equation}
exists for any given $V\in \mathds{R}$. We shall denote by $D_{\gamma_j}$ the map $:V\rightarrow \psi_j(\cdot)$, i.e., $\psi_j(\cdot)=D_{\gamma_j} V$. It is well known that $\psi_j\in H^{1/2}(0,1)$ and $\|\psi_j\|_{1/2}\leq C|V|$ (this result holds also in multidimensional case). Here we denote by the $\|\cdot\|_{1/2}$ the norm of the space $H^{1/2}(0,1)$.

We introduce the matrices
\begin{equation}\label{e203}
\Lambda_{\gamma_k}:=\mathrm{diag}(\frac{1}{\gamma_k-\lambda_i})_{1\leq i\leq N}, k=1, 2, \cdots, N,
\end{equation}
and
\begin{equation}\label{e205}
B=(B_1+B_2+\cdots+B_N)^{-1},
\end{equation}
where $B_k=\Lambda_{\gamma_k}B_0\Lambda_{\gamma_k}, k=1,2, \cdots, N$. It will be shown in Appendix that $B_1+B_2+\cdots+B_N$ is invertible (see also \cite{09}).

Now, let us introduce the feedbacks
\begin{eqnarray}\label{e206}
&&U_k(t)=F_k(u)(t)\nonumber\\
&&:=\langle B\left(\begin{array}{ccc}\int_0^1u(t,x)\phi_1(x)dx\\ \int_0^1u(t,x)\phi_2(x)dx\\ \cdots\\ \int_0^1u(t,x)\phi_N(x)dx\end{array}\right), \left(\begin{array}{ccc}\frac{1}{\gamma_k-\lambda_1}\phi_1'(1)\\ \frac{1}{\gamma_k-\lambda_2}\phi_2'(1)\\ \cdots\\ \frac{1}{\gamma_k-\lambda_N}\phi_N'(1)\end{array}\right)\rangle_N,
\end{eqnarray}
$t>0, k=1, 2, \cdots, N$. In the case where the Gram matrix $B_0$ is nonsingular, one may consider only one feedback form from the above list, for example, the first one. In this case, in fact, we stumble on the feedback designed in \cite{02}.

When $B_0$ is not invertible, we take $U$ of the form
\begin{equation}\label{e207}
U(t)=F(u)(t):=\sum_{k=1}^NF_k(u)(t)=\sum_{k=1}^NU_k(t).
\end{equation}
We can see that the feedback controller here only involves some matrices related to the first $N$ eigenfunctions, while the feedback controller designed in \cite{13} involves Bessel function which is more complex. We claim that the above feedback assures the stability of the steady-state $0$ of (\ref{e102}), and then stabilizes the nonlinear system (\ref{e101}) locally around the zero solution.

The following result amounts to saying that the feedback $U(t)$ achieves global exponential stability of the linear system  (\ref{e102}). More precisely,
\begin{theorem}\label{th201}
Assume that $u_0\in L^2(0,1)$. The feedback $U(t)$, given by (\ref{e207}), exponentially stabilizes the linearized equation (\ref{e102}). More exactly, there exist constants $C>0, \mu>0$, such that the solution to equation (\ref{e102}) with the control in (\ref{e207}), satisfies
\begin{equation}\label{e210}
\|y(t)\|\leq Ce^{-\mu t}\|y_0\|, t\geq 0.
\end{equation}
\end{theorem}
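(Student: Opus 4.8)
The plan is to reduce the boundary control problem to an internal control problem by a standard lifting, then diagonalize the resulting equation and analyze the finite-dimensional unstable block explicitly using the structure of the feedback $U=F(u)$. First I would introduce the lifting $D_{\gamma_j}$ already defined in \eqref{e202}: writing $U(t)=\sum_{k=1}^N U_k(t)$ with $U_k$ as in \eqref{e206}, set $z(t)=y(t)-\sum_{k=1}^N D_{\gamma_k}U_k(t)$, so that $z$ satisfies a homogeneous Dirichlet problem but with extra forcing terms coming from $\partial_t(D_{\gamma_k}U_k)$ and from the zeroth-order correction built into \eqref{e202}. The point of choosing the lifting to solve \eqref{e202} rather than the naive harmonic lifting is that the combination $\gamma_j\psi_j+\psi_j''-\alpha\psi_j-2\sum_k\lambda_k\langle\psi_j,\phi_k\rangle\phi_k=0$ is exactly what makes the projected dynamics on $\mathrm{span}\{\phi_1,\dots,\phi_N\}$ take a clean form; I would expand everything in the eigenbasis $\{\phi_j\}$ of $\mathcal A$.

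Next I would project the equation for $y$ onto the unstable subspace $X_N=\mathrm{span}\{\phi_1,\dots,\phi_N\}$ and onto its orthogonal complement $X_N^\perp$. Let $y_j(t)=\langle y(t),\phi_j\rangle$ and $\mathbf y_N=(y_1,\dots,y_N)^\top$. Integrating \eqref{e102} against $\phi_j$ and integrating by parts twice produces the boundary term $U(t)\phi_j'(1)$, so $\dot y_j=-\lambda_j y_j + U(t)\phi_j'(1)$ for $j\le N$, i.e. $\dot{\mathbf y}_N=-\Lambda_N\mathbf y_N+U(t)\,\mathbf b$ where $\Lambda_N=\mathrm{diag}(\lambda_1,\dots,\lambda_N)$ and $\mathbf b=(\phi_1'(1),\dots,\phi_N'(1))^\top$. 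The key computation is to substitute $U=F(u)=\sum_{k=1}^N\langle B\mathbf y_N,\Lambda_{\gamma_k}\mathbf b\rangle_N$ and show, using $B=(B_1+\cdots+B_N)^{-1}$ with $B_k=\Lambda_{\gamma_k}B_0\Lambda_{\gamma_k}$ and $B_0=\mathbf b\mathbf b^\top$ (the Gram matrix of the scalars $\phi_j'(1)$), that the closed-loop generator on $X_N$ becomes $-\Lambda_N-\mathbf b\,\mathbf b^\top B$ applied appropriately — and that its spectrum is shifted to lie strictly in $\{\mathrm{Re}\,\lambda>\rho\}$. Concretely, I expect the algebra to collapse because $\mathbf b\mathbf b^\top\Lambda_{\gamma_k}\mathbf y_N$ telescopes against the definition of $\Lambda_{\gamma_k}$, giving that each $\gamma_k$ is an eigenvalue of the closed-loop matrix with eigenvector built from $\Lambda_{\gamma_k}\mathbf b$, so the finite-dimensional part decays like $e^{-\gamma_1 t}$ with $\gamma_1>\rho$. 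On $X_N^\perp$ the operator $\mathcal A$ restricted there has spectrum $\ge\lambda_{N+1}\ge\rho$, and the coupling through $U(t)$ is bounded by $\|\mathbf y_N\|$, so a Gronwall/variation-of-constants estimate yields exponential decay of the stable part as well. Assembling the two pieces gives \eqref{e210} with $\mu$ any number less than $\rho$ (equivalently less than $\min\{\gamma_1,\lambda_{N+1}\}$).

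The main obstacle is the well-posedness of the closed-loop system and making the projection argument rigorous: $U(t)$ is only a trace-type quantity, the lifting $D_{\gamma_k}U_k$ lives merely in $H^{1/2}(0,1)$, so the change of variables $z=y-\sum D_{\gamma_k}U_k$ produces forcing terms $\partial_t D_{\gamma_k}U_k$ that are not in $L^2(0,1)$ and must be handled in a weaker (dual) space via semigroup theory, exactly as in \cite{02,09}. I would first establish existence and uniqueness of a mild solution $y\in C([0,\infty);L^2(0,1))$ for the closed-loop system — appealing to the fact that $F$ is a bounded linear functional of the finitely many components $y_1,\dots,y_N$, so the coupled system is a perturbation of an analytic semigroup by an admissible (in fact finite-rank, $H^{1/2}$-valued) boundary operator — and only then carry out the spectral decomposition above. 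A secondary technical point is verifying that $\gamma_1,\dots,\gamma_N$ can indeed be chosen large enough that \eqref{e202} is solvable for every $V$ (i.e. the $\gamma_j$ avoid the Dirichlet spectrum of the modified operator) and simultaneously that $B_1+\cdots+B_N$ is invertible; the latter is deferred to the Appendix, and I would simply invoke it.
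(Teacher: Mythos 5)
Your proposal follows essentially the same route as the paper: lift the boundary data with $D_{\gamma_k}$, split into the span of $\phi_1,\dots,\phi_N$ and its complement, exploit the structure $B=(B_1+\cdots+B_N)^{-1}$ on the finite-dimensional block, and close the stable part by variation of constants against the analytic semigroup. The only real divergence is local: you project the $u$-equation directly and diagonalize the closed-loop matrix $-\sum_k\gamma_k B_kB$ (whose eigenvalues are indeed exactly $-\gamma_1,\dots,-\gamma_N$, since $\{\Lambda_{\gamma_k}\mathbf b\}$ and $\{B\Lambda_{\gamma_k}\mathbf b\}$ are dual bases when $B_0=\mathbf b\mathbf b^{\top}$), whereas the paper projects the lifted variable $v$ (which equals $2u$ on these modes) and uses the Lyapunov function $|B^{1/2}\mathbf v^N|^2$ together with $\gamma_1-\gamma_k\le 0$; both give decay at rate $\gamma_1$, and your sign for the boundary term $U\phi_j'(1)$ is off by a factor $-1$ but this does not affect the argument.
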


\begin{proof}
The arguments are similar to that of Theorem 2.1 in \cite{09}. However, for the later use, we still briefly give the proof here.
Denote by $h_j$ the solution to equation (\ref{e202}) with boundary condition $h_j(1,t)=U_j(t), 1\leq j\leq N$, i.e., $h_j(t,\cdot)=D_{\gamma_j}U_j(t)$. Write $v(t,x)=u(t,x)-\sum_{j=1}^N h_j(t,x)$, then $v(t,0)=v(t, 1)=0, \forall t>0$, and
\begin{equation}\label{e211}
 \left\{\begin{array}{ll}
 v_t+\mathcal{A}v(t)=R(h_1, h_2, \cdots, h_N), \mathrm{in}\ Q,\\
 v(0,x)=v_0(x),\ \ \ \ \ \ \  \ \  \  \ \  x\in(0,1),
\end{array}\right.
\end{equation}
Here
\begin{eqnarray}\label{e212}
&&R(h_1, h_2, \cdots, h_N)=-\sum_{i=1}^Nh_i'(t)\nonumber\\
&&-2\sum_{k=1, i=1}^N\lambda_k\langle h_i(t), \phi_k\rangle\phi_k+\sum_{i=1}^N\gamma_ih_i(t).
\end{eqnarray}
We shall write this term for simplicity by $R$ in the following.

Involving equations (\ref{e104}) and (\ref{e202}), by simple calculation, we can get that, for $1\leq i, k\leq N$,
\begin{equation}\label{e213}
\langle h_k, \phi_i\rangle=-\frac{1}{\gamma-\lambda_i}U_k(t)\cdot \phi'_i(1).
\end{equation}
With this identity, and the definition of $U_k(t)$, it follows that
\begin{equation}\label{e214}
\left(\begin{array}{ccc}\langle h_k, \phi_1\rangle\\ \langle h_k, \phi_2\rangle\\ \cdots\\ \langle h_k, \phi_N\rangle\end{array}\right)=-B_kB\left(\begin{array}{ccc}\langle u(t), \phi_1\rangle\\ \langle u(t), \phi_2\rangle\\ \cdots\\ \langle u(t), \phi_N\rangle\end{array}\right).
\end{equation}
By the latter equation, and the relation between $v$ and $u$, one can obtain that
\begin{eqnarray}\label{e215}
&&U_k(t)=\tilde{F}_k(v)(t)\nonumber\\&&=\frac{1}{2}\langle B\left(\begin{array}{ccc}\int_0^1v(t,x)\phi_1(x)dx\\ \int_0^1v(t,x)\phi_2(x)dx\\ \cdots\\ \int_0^1v(t,x)\phi_N(x)dx\end{array}\right), \left(\begin{array}{ccc}\frac{1}{\gamma_k-\lambda_1}\phi_1'(1)\\ \frac{1}{\gamma_k-\lambda_2}\phi_2'(1)\\ \cdots\\ \frac{1}{\gamma_k-\lambda_N}\phi_N'(1)\end{array}\right)\rangle_N.
\end{eqnarray}
Moreover, likewise in (\ref{e214}), we have, for $k=1, 2, \cdots, N$,
 \begin{equation}\label{e216}
\left(\begin{array}{ccc}\langle h_k, \phi_1\rangle\\ \langle h_k, \phi_2\rangle\\ \cdots\\ \langle h_k, \phi_N\rangle\end{array}\right)=-\frac{1}{2}B_kB\left(\begin{array}{ccc}\langle v(t), \phi_1\rangle\\ \langle v(t), \phi_2\rangle\\ \cdots\\ \langle v(t), \phi_N\rangle\end{array}\right).
\end{equation}

Denote by $\mathbf{v}^N$ the vector consisted by the first $N$ modes of $v$, i.e., $$\mathbf{v}^N(t)=(\langle v(t), \phi_1\rangle,\langle v(t), \phi_2\rangle, \cdots, \langle v(t), \phi_N\rangle)^T,$$ and denote by $\Lambda$ the diagonal matrix $\mathrm{diag}(\lambda_i)_{1\leq i\leq N}$. Multiplying equation (\ref{e211}) by $\phi_1, \phi_2, \cdots, \phi_N$ respectively, and using identity (\ref{e216}), one can obtain the equation satisfied by $\mathbf{v}^N$ as follows,
\begin{equation}\label{e217}
 \left\{\begin{array}{ll}
 \frac{d}{dt}\mathbf{v}^N(t)=-\gamma_1\mathbf{v}^N(t)+\sum_{k=2}^N(\gamma_1-\gamma_k)B_kB\mathbf{v}^N(t), t>0,\\
 \mathbf{v}^N(0)=\mathbf{v}^N_0.
\end{array}\right.
\end{equation}
Recall that $B_j, 1\leq j\leq N$ are positive semidefinite symmetric matrices, we know that, $B=(B_1+B_2+\cdots+B_N)^{-1}$ is a positive definite symmetric matrix. Thus, one can define another positive definite symmetric matrix, denoted by $B^{1/2}$, such that $B^{\frac{1}{2}}B^{\frac{1}{2}}=B$. Multiplying Equation (\ref{e217}) by $B\mathbf{v}^N$, we get that
\begin{eqnarray}\label{e218}
&&\frac{1}{2}\frac{d}{dt}|B^{\frac{1}{2}}\mathbf{v}^N(t)|_N=-\gamma_1|B^{\frac{1}{2}}\mathbf{v}^N(t)|_N\nonumber\\
&&+\sum_{k=2}^N(\gamma_1-\gamma_k)\langle B_kB\mathbf{v}^N, B\mathbf{v}^N \rangle_N.
 \end{eqnarray}
Since $\gamma_1-\gamma_k<0, k=2, 3, \cdots, N$, this leads to
\begin{equation}\label{e219}
|\frac{d}{dt}\mathbf{v}^N(t)|_N+|\mathbf{v}^N(t)|_N\leq e^{-2\gamma_1 t}|\mathbf{v}^N_0|_N, \forall t>0.
 \end{equation}

Write $X_1=\mathrm{linspan}\{\phi_j\}_{j=1}^N, X_2=X_1^{\perp}, P_N$ the algebraic projection $L^2(0,1)$ onto $X_1$, and set $v_1=P_N v, v_2=(I-P_N)v$. Then, $v=v_1+v_2$, and we may decompose equation (\ref{e211})
into the following system
\begin{equation}\label{e220}
 \left\{\begin{array}{ll}
 \frac{dv_1}{dt}+\mathcal{A}_1v_1(t)=P_NR(h_1, h_2, \cdots, h_N), t> 0,\\
  \frac{dv_2}{dt}+\mathcal{A}_2v_2(t)=(I-P_N)R(h_1, h_2, \cdots, h_N),t> 0,
\end{array}\right.
\end{equation}
where $\mathcal{A}_1=P_N\mathcal{A}$, $\mathcal{A}_2=(I-P_N)\mathcal{A}$.

We can refer from equation (\ref{e219}) that
\begin{equation}\label{e221}
\|v_1(t)\|\leq Ce^{-2\gamma_1 t}\|v_1(0)\|, \forall t>0.
 \end{equation}
Moreover, it follows by equation (\ref{e219}), the definition of $h_k$ and identity (\ref{e215}) that
\begin{equation}\label{e222}
\|h_k(t)\|+\|h_k'(t)\|\leq e^{-2\gamma_1 t}\|v_1(0)\|, \forall t>0.
\end{equation}
One can obtain from the second equation in (\ref{e220}) that
\begin{eqnarray}\label{e223}
 &&\|v_2(t)\|\leq \|e^{-\mathcal{A}_2t}v_2(0)+\int_0^te^{-\mathcal{A}_2(t-s)}(I-P_N)Rds\|\nonumber\\
 &&\leq Ce^{-\rho t}\|v_0\|, \ t>0.
\end{eqnarray}
Finally, by (\ref{e221}), (\ref{e223}) and the relation between $u$ and $v$, we obtain the estimate
\begin{eqnarray}\label{e224}
\|u(t)\|\leq C e^{-\mu t}\|u_0\|, t\geq 0,
\end{eqnarray}
for some $\mu>0$. This completes the proof of Theorem \ref{th201}.
\end{proof}

\section{Stabilization of nonlinear equation}
\setcounter{equation}{0}
For the stabilization of the nonlinear Fisher's equation, we have the following result.
\begin{theorem}\label{th301}
Assume that $u_0\in L^2(0,1)$. The feedback $U$, given by (\ref{e207}), locally stabilizes the equation (\ref{e101}). More exactly, there exist constants $C>0, \mu>0$ and $\delta>0$, such that for all $\|u_0\|\leq \delta$, the solution to equation (\ref{e101}) with control in (\ref{e207}), satisfies
\begin{equation}\label{e302}
\|u(t)\|\leq Ce^{-\mu t}\|u_0\|, t\geq 0.
\end{equation}
\end{theorem}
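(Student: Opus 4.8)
The plan is to combine the lifting substitution from the proof of Theorem \ref{th201} with parabolic energy estimates, treating the quadratic term $\beta u^2$ as a higher-order perturbation; the scheme parallels the semilinear analysis of V.\ Barbu and I.\ Munteanu, but it has to be redone because of the different shape of the feedback. As there, set $h_j(t,\cdot)=D_{\gamma_j}U_j(t)$ and $v=u-\sum_{j=1}^N h_j$. The identities (\ref{e213})--(\ref{e216}) are purely algebraic consequences of (\ref{e104}) and (\ref{e202}), so they remain valid; in particular the feedback is still expressed through the first $N$ modes of $v$ by (\ref{e215}). Substituting $u=v+\sum_j h_j$ into (\ref{e101}) shows that $v$ satisfies the analogue of (\ref{e211}), namely $v_t+\mathcal{A}v=R-\beta u^2$ in $Q$, $v(t,0)=v(t,1)=0$, with $R$ the term (\ref{e212}). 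The only new features are the forcing $-\beta u^2$ and the fact that, since $R$ contains the time derivatives $h_j'$ and $\frac{d}{dt}\mathbf{v}^N$ now carries the extra term $-\beta(\langle u^2,\phi_i\rangle)_{1\le i\le N}$, the term $R$ acquires, besides its linear dependence on $\mathbf{v}^N$, a quadratic contribution of size $O(\|u\|^2)$.

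I would then split $v=v_1+v_2$, $v_1=P_N v$, $v_2=(I-P_N)v$, and estimate the two pieces as in the proof of Theorem \ref{th201}. Projecting the $v$-equation on $\phi_1,\dots,\phi_N$ and using (\ref{e216}) gives the ODE (\ref{e217}) with the extra forcing $-\beta(\langle u^2,\phi_i\rangle)_{1\le i\le N}$; since $|\langle u^2,\phi_i\rangle|\le\|u^2\|_{L^1(0,1)}=\|u\|^2$, no regularity of $u$ beyond $L^2$ is needed here, and repeating (\ref{e218}) while discarding the nonpositive terms (this is where the sign $\gamma_1-\gamma_k<0$, $k\ge2$, is used) gives, after a Young inequality,
$$\frac{d}{dt}|B^{1/2}\mathbf{v}^N(t)|_N^2+\gamma_1|B^{1/2}\mathbf{v}^N(t)|_N^2\le C\|u(t)\|^4 .$$
Testing the $v_2$-equation in (\ref{e220}) with $v_2$, using $\langle\mathcal{A}_2 v_2,v_2\rangle\ge c_0(\|v_2\|_1^2+\|v_2\|^2)$ on $X_2$ (valid since $\lambda_{N+1}\ge\rho>0$), bounding the $(I-P_N)R$ term by $C(|\mathbf{v}^N|+\|u\|^2)\|v_2\|$, and using the one-dimensional embedding $H^1_0(0,1)\hookrightarrow L^\infty(0,1)$ to get $\beta|\langle u^2,v_2\rangle|\le C\|u\|^2\|v_2\|_1$ so that both are absorbed into the dissipation, gives
$$\frac{d}{dt}\|v_2(t)\|^2+\rho_0\|v_2(t)\|_1^2+\rho_1\|v_2(t)\|^2\le C\|v_1(t)\|^2+C\|u(t)\|^4$$
for some $\rho_0,\rho_1>0$.

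Finally, set $E(t)=|B^{1/2}\mathbf{v}^N(t)|_N^2+a\|v_2(t)\|^2$ with $a>0$ small. The two inequalities combine into $\frac{d}{dt}E+cE\le C\|u\|^4$, and since $\|u(t)\|^2\le C(\|v_1(t)\|^2+\|v_2(t)\|^2)\le C'E(t)$ (using $\|h_j(t)\|\le C|\mathbf{v}^N(t)|$ and the equivalence of norms on $X_1$), this becomes $\frac{d}{dt}E+cE\le CE^2$. If $E(0)$ --- which satisfies $E(0)\le C\|v_0\|^2\le C\|u_0\|^2$ --- lies below the resulting smallness threshold, a continuation argument, with the local well-posedness of the closed-loop system obtained by a short-time contraction for the $v$-equation, shows that $E(t)$ never exceeds that threshold, whence $\frac{d}{dt}E\le-\frac{c}{2}E$ and $E(t)\le E(0)e^{-ct/2}$. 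Then $\|u(t)\|\le CE(t)^{1/2}\le C\|u_0\|e^{-\mu t}$ with $\mu=c/4$, which is (\ref{e302}), for $\|u_0\|\le\delta$ with $\delta$ the smallness threshold. One could equivalently run a contraction of the solution map in the weighted space $\{v:\ \sup_{t\ge0}e^{\mu t}\|v(t)\|+(\int_0^\infty e^{2\mu t}\|v(t)\|_1^2\,dt)^{1/2}<\infty\}$, $0<\mu<\rho$, the decisive bound being $\int_0^\infty e^{2\mu t}\|u(t)\|^4\,dt\le C(\sup_{t\ge0}e^{\mu t}\|u(t)\|)^4$.

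The step I expect to be the main obstacle is the one the authors flag: because (\ref{e207}) is the \emph{sum} $\sum_{k=1}^N F_k$ rather than a single feedback as in \cite{02}, the forcing $R$ and the perturbed finite-dimensional system (\ref{e217}) --- with its non-diagonal coupling $\sum_{k=2}^N(\gamma_1-\gamma_k)B_kB$ --- are not those of \cite{02}, so the estimates controlling $R$ and absorbing the quadratic forcing must be carried out from scratch and shown to be compatible with that coupling. The companion technical difficulty is that $u$ is only $H^{1/2}$ up to the boundary (because of the lifting $h_j$), so $\beta u^2$ cannot be handled by naive $H^1$ estimates; the remedy, as above, is that on the $N$-dimensional unstable part only $\|u^2\|_{L^1}=\|u\|^2$ is needed, while on the stable subspace $\|v_2\|_1$ is absorbed into $\langle\mathcal{A}_2 v_2,v_2\rangle$ via the one-dimensional Sobolev embedding, so that no extra regularity of $u$ enters anywhere.
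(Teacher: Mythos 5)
Your argument is essentially correct, but it follows a genuinely different route from the paper. The paper keeps the lifted equation in abstract form: it defines the closed-loop operator $\mathcal{A}_F$ in (\ref{e306}), rewrites the problem as the integral equation (\ref{e308}), and runs a global-in-time contraction for the map $S$ on a small ball of $L^2(0,\infty;H_0^1(0,1))$; the two inputs are the exponential stability of $e^{-\mathcal{A}_Ft}$ (Theorem \ref{th201}) and the smoothing estimate (\ref{e310}) $\int_0^\infty\|e^{-\mathcal{A}_Ft}v_0\|_1\,dt\le C\|v_0\|$, together with the quadratic Lipschitz bounds on $\mathcal{G}(v)=-\beta v(I+\tilde{D}_F)v$, exactly as in Theorem 4.1 of \cite{02} and Theorem 5.1 of \cite{19}. (Your closing remark about a weighted-in-time contraction is the variant closest to what the paper actually does.) You instead work with the decomposition $v=v_1+v_2$ of (\ref{e220}) directly, turn (\ref{e217}) plus a dissipative estimate for $v_2$ into a Lyapunov inequality $\frac{d}{dt}E+cE\le CE^2$, and close by a continuation argument. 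Your route is more elementary and self-contained --- it avoids introducing $\mathcal{A}_F$ and the mild formulation, and it makes explicit why only $\|u^2\|_{L^1}=\|u\|^2$ is needed on the unstable modes --- but it leans on the one-dimensional embedding $H_0^1(0,1)\hookrightarrow L^\infty(0,1)$ to absorb $\beta\langle u^2,v_2\rangle$ into the dissipation, so it does not transfer to higher dimensions as directly as the paper's semigroup argument (which only needs $\|\mathcal{G}(v)\|\le C\|v\|_1^2$). The paper's fixed point also delivers the space-time bound $\int_0^\infty\|v\|_1^2\,dt\le C$ of (\ref{e311}) as a byproduct, which is then reused in the proof of Theorem \ref{th302}; in your scheme that bound follows too, from integrating the $v_2$-energy inequality, but you would need to say so explicitly if Theorem \ref{th302} is to be built on your version. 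Two small points to tighten: the ODE for $\mathbf{v}^N$ is a priori implicit because $R$ contains $h_i'$ and $\langle h_i',\phi_k\rangle$ involves $\frac{d}{dt}\mathbf{v}^N$ --- one should note that the resulting matrix on the left is constant and invertible before asserting the forced version of (\ref{e217}); and the local well-posedness underlying the continuation argument deserves at least a sentence, since $u_0$ is only in $L^2(0,1)$ and $u^2$ is only in $L^1(0,1)\subset H^{-1}(0,1)$.
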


Since the trace of function in space $H^1(0,1)$ is well-defined, to study the stabilization result in the space $H^1(0,1)$, we need to give some compatibility condition of the boundary control and the initial data. More exactly, the initial data need to be taken from the following space
$$I_C=\{u\in H^1(0,1); F(u)=u(1), u(0)=0\}.$$
\begin{theorem}\label{th302}
Assume that $u_0\in I_C$. The feedback $U$, given by (\ref{e207}), locally stabilizes the equation (\ref{e101}) in $H^1(0,1)$. More exactly, there exist constants $C>0, \mu>0$ and $\delta>0$, such that for all $\|u_0\|\leq \delta$, the solution to equation (\ref{e101}) with control in (\ref{e207}), satisfies
\begin{equation}\label{e304}
\|u(t)\|_1\leq Ce^{-\mu t}\|u_0\|_1, t\geq 0.
\end{equation}
\end{theorem}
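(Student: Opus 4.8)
The plan is to bootstrap the $L^2(0,1)$ exponential decay already supplied by Theorem \ref{th301} up to the $H^1(0,1)$-level through a parabolic energy estimate on the lifted variable $v := u - \sum_{j=1}^{N} h_j$, where, as in the proof of Theorem \ref{th201}, $h_j(t,\cdot) := D_{\gamma_j}U_j(t)$. The preliminary bookkeeping is this: since (\ref{e202}) is linear in $(\psi_j,V)$, one has $h_j(t,x)=U_j(t)\,\omega_j(x)$ with $\omega_j:=D_{\gamma_j}1$ a \emph{fixed} profile, and elliptic regularity for (\ref{e202}), whose right-hand side is built from the smooth $\phi_k$, gives $\omega_j\in C^\infty[0,1]$. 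As $U_j=F_j(u)$ is an $L^2$-bounded linear functional of $u$, Theorem \ref{th301} gives $|U_j(t)|\le C\|u(t)\|\le Ce^{-\mu t}\|u_0\|$, hence $\|h_j(t)\|_{H^m(0,1)}\le Ce^{-\mu t}\|u_0\|$ for every $m\ge 0$. The compatibility hypothesis $u_0\in I_C$ is precisely what makes $v_0=u_0-\sum_jU_j(0)\omega_j\in H^1_0(0,1)$, since $v_0(0)=u_0(0)=0$ and $v_0(1)=u_0(1)-F(u_0)=0$, while $v(t,0)=v(t,1)=0$ for all $t>0$. A standard fixed-point argument — using the $L^2$-continuity of $u\mapsto F(u)$ and the smoothing of the Dirichlet heat semigroup — together with the a priori bound obtained below and the blow-up alternative, yields a unique global solution with $u\in C([0,\infty);H^1(0,1))$ and the usual parabolic regularity $v\in L^2_{\mathrm{loc}}((0,\infty);H^2(0,1))\cap H^1_{\mathrm{loc}}((0,\infty);L^2(0,1))$.

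Subtracting the elliptic relations defining the $h_j$ from (\ref{e101}) gives, as in (\ref{e211})--(\ref{e212}) but with the nonlinearity added,
\[
v_t + \mathcal{A}v = R - \beta u^2 \quad \mathrm{in}\ Q, \qquad v(t,0)=v(t,1)=0, \qquad v(0)=v_0\in H^1_0(0,1),
\]
with the same finite linear combination $R$ of $h_j$, $h_j'$ and $\langle h_j,\phi_k\rangle\phi_k$ as in (\ref{e212}). The term $h_j'=\dot U_j\omega_j$ is controlled by estimating $\dot U_j$ via the finite-dimensional system (\ref{e217}), which in the nonlinear case carries the extra forcing $-\beta(\langle u^2,\phi_i\rangle)_i$; combining this with $|U_j(t)|\le Ce^{-\mu t}\|u_0\|$, the one-dimensional embedding $H^1(0,1)\hookrightarrow L^\infty(0,1)$, and $\|u\|_1\le C\|v_x\|+Ce^{-\mu t}\|u_0\|$ (which follows from $u=v+\sum_jh_j$ and Poincar\'e), one obtains, pointwise in $t$,
\[
\|R(t)\|^2 + \beta^2\|u^2(t)\|^2 \;\le\; Ce^{-2\mu t}\|u_0\|^2 \;+\; C\|u_0\|^2\,e^{-2\mu t}\,\|v_x(t)\|^2 .
\]

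The core is the energy estimate: testing the $v$-equation with $\mathcal{A}v$, integrating by parts (all boundary terms vanish since $v$, and hence $v_t$, vanish at $x=0,1$), and using Young's inequality gives $\frac{d}{dt}\langle\mathcal{A}v,v\rangle+\|\mathcal{A}v\|^2\le 2\|R\|^2+2\beta^2\|u^2\|^2$. Here $\mathcal{A}$ is not positive and $\langle\mathcal{A}v,v\rangle=\|v_x\|^2-\alpha\|v\|^2$ is not by itself coercive; but for $v\in H^2(0,1)\cap H^1_0(0,1)$ Rolle's theorem yields $\|v_x\|\le\|v_{xx}\|\le\|\mathcal{A}v\|+\alpha\|v\|$, so $\langle\mathcal{A}v,v\rangle\le\|v_{xx}\|^2\le 2\|\mathcal{A}v\|^2+2\alpha^2\|v\|^2$, i.e. $\|\mathcal{A}v\|^2\ge\frac12\langle\mathcal{A}v,v\rangle-\alpha^2\|v\|^2$. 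Inserting this, rewriting $\|v_x\|^2$ in the forcing bound as $\langle\mathcal{A}v,v\rangle+\alpha\|v\|^2$, and invoking $\|v(t)\|\le Ce^{-\mu t}\|u_0\|$ (first paragraph plus Theorem \ref{th301}), one reaches
\[
\frac{d}{dt}\langle\mathcal{A}v,v\rangle + \frac12\langle\mathcal{A}v,v\rangle \;\le\; Ce^{-2\mu t}\|u_0\|^2 + C\|u_0\|^2\,\langle\mathcal{A}v,v\rangle .
\]
Shrinking $\delta$ so that $C\|u_0\|^2\le C\delta^2\le\frac14$ absorbs the last term, Gronwall's lemma together with $\langle\mathcal{A}v_0,v_0\rangle\le\|v_0\|_1^2\le C\|u_0\|_1^2$ gives $\langle\mathcal{A}v(t),v(t)\rangle\le Ce^{-\mu' t}\|u_0\|_1^2$, hence $\|v_x(t)\|^2=\langle\mathcal{A}v(t),v(t)\rangle+\alpha\|v(t)\|^2\le Ce^{-\mu' t}\|u_0\|_1^2$, and finally $\|u(t)\|_1\le\|v(t)\|_1+\sum_j\|h_j(t)\|_1\le C\|v_x(t)\|+Ce^{-\mu t}\|u_0\|\le Ce^{-\mu'' t}\|u_0\|_1$, which is (\ref{e304}). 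I expect this energy estimate to be the main obstacle: since the feedback has already removed the instability of $\mathcal{A}$ on the low modes, the $H^1$-decay cannot come from the parabolic dissipation alone, so one must feed the $L^2$-decay of Theorem \ref{th301} into the estimate at several points — to make $\langle\mathcal{A}v,v\rangle$ a usable Lyapunov functional via $\|v_x\|\le\|v_{xx}\|$, and to control $\|v\|$ — while simultaneously absorbing the quadratic term $\beta u^2$ into the dissipation $\|\mathcal{A}v\|^2$, which is what forces the $L^2$-smallness threshold $\delta$ (possibly smaller than in Theorem \ref{th301}) and requires checking that the exponentially small lift $\sum_jh_j$, perturbing both the equation and the boundary data, causes no harm; a secondary point is securing enough space-time regularity of the closed-loop solution for the formal identities to be rigorous, for which the space $I_C$ is tailor-made.
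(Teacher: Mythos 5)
Your proposal is correct and follows essentially the same route as the paper: lift the boundary via $v=u-\sum_j h_j$ (with $v_0\in H^1_0(0,1)$ thanks to $u_0\in I_C$), bound the forcing $R-\beta u^2$ using the $L^2$-decay from Theorem \ref{th301}, and close an $H^1$ energy estimate obtained by testing against (essentially) $-v_{xx}$. The only difference is bookkeeping: the paper substitutes $\tilde v=v e^{\mu t/2}$ and runs two successive time-integrated estimates, whereas you run a pointwise Gronwall inequality on $\langle\mathcal Av,v\rangle$ with a smallness absorption — both work.
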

We mention here that the proof of Theorem \ref{th301} is not straightforward by the proof of Theorem 4.1 in \cite{02} because the structure of the feedback controller is different. More concretely, we should find a different way from that in \cite{02} to combine the decomposited systems as a semigroup form, so we shall show this part in detail, and the rest follows directly. Theorem \ref{th302} is new, we shall also give the proof below.
\begin{proof}[Proof of Theorem \ref{th301}]
We still ``lift'' the boundary condition firstly. Denote by $v=u-\sum_{k=1}^Nh_k$. Here, $h_k=D_{\gamma_k}U_k$, and $U_k, 1\leq k\leq N$ are defined as in (\ref{e206}). We know by (\ref{e215}) that $h_k=D_{\gamma_k}\tilde{F}_k(v)$. Denote $\tilde{D}_F(v)=\sum_{k=1}^ND_{\gamma_k}\tilde{F}_k(v)$. Then, $u=(I+\tilde{D}_F)v$. It suffices to prove the existence of exponentially stable solution to the following equation
\begin{equation}\label{e305}
 \left\{\begin{array}{ll}
v_t- v_{xx}-\alpha v+\beta ((I+\tilde{D}_F)v)^2=R, \mathrm{in} \ Q,\\
 v(0,t)=0, v(1,t)=0,\ \ \ \ \ \ \ \ t\in (0,\infty),\\
  v(x,0)=v_0(x)\ \ \ \ \ \ \ x\in(0,1).
\end{array}\right.
\end{equation}
where $R$ is the same as in (\ref{e212}). Write $h_k$ in the term $R$ by $D_{\gamma_k}\tilde{F}_k(v)$ for $1\leq k\leq N$, then equation (\ref{e305}) can be written in an abstract form as
\begin{eqnarray}
&&(I+\tilde{D}_F)v_t=\mathcal{A}v+2\sum_{k,i=1}^N\lambda_k\langle D_{\gamma_i}\tilde{F}_i(v), \phi_k\rangle\phi_k\nonumber\\
&&-\sum_{i=1}^N\gamma_iD_{\gamma_i}\tilde{F}_iv+\beta ((I+\tilde{D}_F)v)^2.
\end{eqnarray}

 Notice that $u=(I+\tilde{D}_F)v$ and $v=(I-D_F)u$, where $D_F u=\sum_{k=1}^ND_{\gamma_k}F_k(u)$, we see that $(I-\tilde{D}_F)^{-1}=I-D_F$. Denote by $\mathcal{A}_F: D(\mathcal{A})\rightarrow L^2(0,1)$ the operator
\begin{eqnarray}\label{e306}
&&\mathcal{A}_F v=(I-D_F)\nonumber\\&&[\mathcal{A}v+2\sum_{k,i=1}^N\lambda_k\langle D_{\gamma_i}\tilde{F}_i(v), \phi_k\rangle\phi_k-\sum_{i=1}^N\gamma_iD_{\gamma_i}\tilde{F}_iv].
\end{eqnarray}
Then, equation (\ref{e305}) can be equivalently written as
\begin{equation}\label{e307}
\frac{dv}{dt}+\mathcal{A}_F v= -\beta(I-D_F)((I+\tilde{D}_F)v)^2
\end{equation}
We set $\mathcal{G}(v)=-\beta(I-D_F)((I+\tilde{D}_F)v)^2=-\beta v(I+\tilde{D}_F)v$, then
\begin{equation}\label{e308}
v(t)=e^{-\mathcal{A}_Ft}v_0+\int_0^te^{-\mathcal{A}_F(t-s)}\mathcal{G}(v)(s)ds, t\geq 0.
\end{equation}
We are going to show firstly that, for $\|v_0\|$ sufficiently small, the map
$S: L^2(0,\infty; H_0^1(0,1))\rightarrow L^2(0,\infty; H_0^1(0,1))$, defined by
$$S(v)(t)=e^{-\mathcal{A}_Ft}v_0+\int_0^te^{-\mathcal{A}_F(t-s)}\mathcal{G}(v)(s)ds$$
is contraction on
$$K_r=\{v\in L^2(0,\infty; H_0^1(0,1)); \|v\|_{L^2(0,\infty; H_0^1(0,1))}\leq r\}$$
for some $r>0$.

By Theorem \ref{th201}, we know that $\mathcal{A}_F$ generates a strong continuous, and exponentially stable semigroup on $L^2(0,1)$, i.e.,
\begin{equation}\label{e309}
\|e^{-\mathcal{A}_F t}v_0\|\leq Ce^{-\mu t}\|v_0\|, \forall t>0.
\end{equation}
By inequalities (\ref{e221}) and (\ref{e223}), one can easily obtain the estimates
\begin{equation}\label{e226}
\int_0^{\infty}\lambda_j|v_j(t)|^2dt\leq C\|v_0\|^2, j=1, 2, \cdots,
\end{equation}
 where $v_j(t)=\langle v, \phi_j\rangle$. This implies that
\begin{equation}\label{e310}
\int_0^{\infty}\|e^{-\mathcal{A}_F t}v_0\|_1dt\leq C\|v_0\|, \forall v_0\in L^2(0,1).
\end{equation}
Moreover, sine $I+\tilde{D}_F$ is linear continuous on $L^2(0,1)$, it is easy to check that, for any $v, \bar{v}\in H_0^1(0,1)$,
$$\|\mathcal{G}(v)-\mathcal{G}(\bar{v})\|\leq C_1 \|v-\bar{v}\|_1(\|v\|_1+\|\bar{v}\|_1),$$
$$\|\mathcal{G}(v)\|\leq C_2\|v\|_1^2.$$
By the latter inequalities and (\ref{e309}), (\ref{e310}), adopt the method applied in the proof of Theorem 4.1 in \cite{02} (see also the proof of Theorem 5.1 in \cite{19}), we can obtain the existence of fixed point of map $S$, which turns out to be the unique solution to equation (\ref{e306}). Moreover, it is exponentially stable, i.e., $\|v(t)\|\leq C e^{-\gamma t}\|v(0)\|$, and the latter extends to the solution to (\ref{e101}), and (\ref{e302}) follows.
\end{proof}

\begin{proof}[Proof of Theorem 3.2]
Since $u_0\in I_C$, we see that $v_0\in H_0^1(0,1)$. By Theorem \ref{th301}, we know that
\begin{eqnarray}\label{e311}
&&\|v(t)\|+\|u(t)\|\leq Ce^{-\mu t}\|u_0\|, \forall t>0,\nonumber\\&& \int_0^{\infty}(\|u(t)\|_1^2+\|v(t)\|^2_1)dt\leq C.
\end{eqnarray}
One can refer from the expression of $R$ and the latter inequalities that
 \begin{equation}\label{e312}
\|R(t)\|\leq Ce^{-\mu t}\|u_0\|, \forall t>0.
\end{equation}
Denote by $\tilde{v}(x,t)$ the function $v(t,x)e^{\mu t/2 }$, it is not difficult to check that $\tilde{v}$ satisfies the following equation
\begin{equation}\label{e313}
 \left\{\begin{array}{ll}
\tilde{v}_t-\tilde{v}_{xx}-(\alpha+\mu/2) \tilde{v}=-\beta e^{\mu t/2}u^2+e^{\mu t/2}R , \mathrm{in} \ Q,\\
 \tilde{v}(0,t)=0, \tilde{v}(1,t)=0,\ \ \ \ \ \ t\in (0,\infty),\\
  \tilde{v}(x,0)=v_0(x)\ \ \ \ \ x\in(0,1).
\end{array}\right.
\end{equation}
Multiplying equation (\ref{e313}) by $\tilde{v}$ in the sense of $L^2(0,1)$, and integrating on $(0, t)$, we obtain by inequalities (\ref{e311}), (\ref{e312}), and the Sobolev imbedding Theorem that
\begin{equation}\label{e314}
\|\tilde{v}(t)\|^2+\int_0^{\infty}\|\tilde{v}_{x}(s)\|^2ds\leq C \|u_0\|^2, \forall t>0.
\end{equation}
Multiplying equation (\ref{e313}) by $-\tilde{v}_{xx}$, and integrating on $(0, t)$, it follows that
\begin{eqnarray}\label{e315}
&&\|\tilde{v}_{x}(t)\|^2+\int_0^{\infty}\|\tilde{v}_{xx}(s)\|^2ds\nonumber\\ &&\leq C (\int_0^{\infty}\|\tilde{v}_{x}(s)\|^2ds+\|u_0\|_1^2)
\leq C, \forall t>0.
\end{eqnarray}
The latter inequality implies that
\begin{equation}\label{e316}
\|v(t)\|_1^2\leq C e^{-\mu t}\|u_0\|_1^2, \forall t>0.
\end{equation}
The latter extends to the solution to (\ref{e101}), and (\ref{e304}) follows.
\end{proof}
\section{Numerical Examples}
\setcounter{equation}{0}
In this section, we present the numerical results of the Fisher's equation with feedback boundary feedback controller designed above. In model (\ref{e101}), we take $\alpha =30, \beta=0.30$, and set the initial profile as $u_0(x)=5xe^x, x\in [0,1]$.

 \begin{figure}[htbp]
  \centering
  \includegraphics[width=0.3\textwidth]{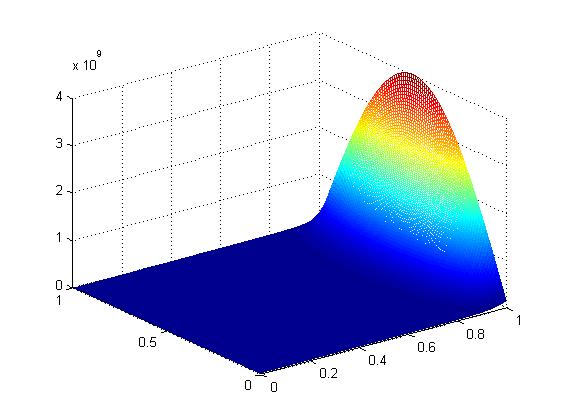}\\
  \caption{State of Fisher's equation without control}\label{Fig.1}
\end{figure}

We can observe from Figure \ref{Fig.1} that the state is unstable without control.

We take $\gamma_1=15, \gamma_2=20$ in (\ref{e203}), then
$$B_1=\pi^2\left(\begin{array}{ccc}\frac{1}{(45-\pi^2)^2}\ \ \ \ \frac{-2}{(45-\pi^2)(45-(2\pi)^2)}\\ \frac{-2}{(45-\pi^2)(45-(2\pi)^2)}\ \ \ \ \frac{4}{(45-(2\pi)^2)^2}\end{array}\right),$$
$$B_2=\pi^2\left(\begin{array}{ccc}\frac{1}{(50-\pi^2)^2}\ \ \ \ \frac{-2}{(50-\pi^2)(50-(2\pi)^2)}\\ \frac{-2}{(50-\pi^2)(50-(2\pi)^2)}\ \ \ \ \frac{4}{(50-(2\pi)^2)^2}\end{array}\right).$$
By Theorem \ref{th301}, we know that the control of feedback form
\begin{equation}\label{e403}
U(t)=F(u)(t):=\langle TB\left(\begin{array}{ccc}\int_0^1u\sin\pi xdx\\ \int_0^1u\sin2\pi xdx\end{array}\right), \left(\begin{array}{ccc}1\\ 1\end{array}\right)\rangle_2,
\end{equation}
exponentially stabilizes Fisher's equation (as shown in Figure \ref{Fig.2}).
Here
\begin{equation}\label{e401}
T:=\left(\begin{array}{ccc}\frac{-\pi}{45-\pi^2}\ \ \  \frac{2\pi}{45-(2\pi)^2}\\ \frac{-\pi}{50-\pi^2}\ \ \  \frac{2\pi}{50-(2\pi)^2}\end{array}\right), B=(B_1+B_2)^{-1}.
\end{equation}

 \begin{figure}[htbp]
  \centering
  \includegraphics[width=0.3\textwidth]{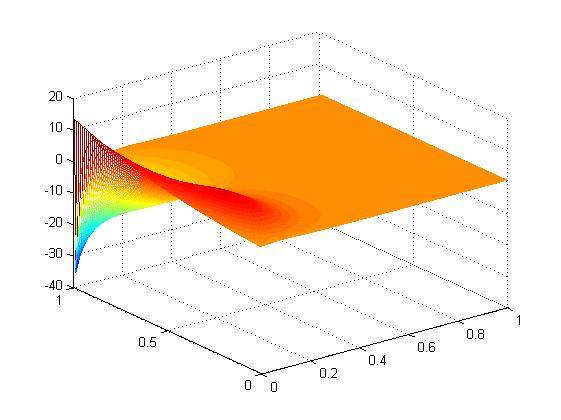}\\
  \caption{State of Fisher's equation with control}\label{Fig.2}
\end{figure}

As we can realize that it is more practical to use only part of information of the state, one can take a modified feedback form as
\begin{equation}\label{e404}
U(t)=F(u)(t):=\langle TB\left(\begin{array}{ccc}\int_a^bu\sin\pi xdx\\ \int_a^bu\sin2\pi xdx\end{array}\right), \left(\begin{array}{ccc}1\\ 1\end{array}\right)\rangle_2,
\end{equation}
where $[a,b]\subset [0,1]$ ia a proper set in $[0,1]$.

Fix $b=1$, and simulate the closed-loop system with different values for $a$. We find that for any $a\leq 0.24$ the system governed by Fisher's equation (\ref{e101}) can be still stabilized using the feedback form (\ref{e404}) (as shown in Figure \ref{Fig.3.1}). However, the value of $a$ can't be too small, as we have shown in Figure \ref{Fig.3.2}, when $a=0.25$, it seems that the system can't be stabilized with control of the form (\ref{e404}).
\begin{figure}[H]
\centering
\subfigure[State of Fisher's equation with $a=0.24$]{
\label{Fig.3.1}
\includegraphics[width=0.3\textwidth]{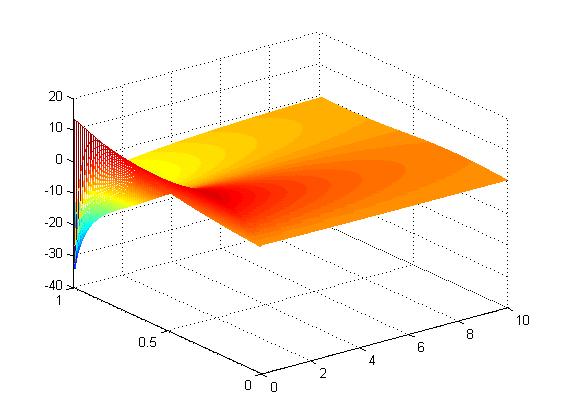}}
\subfigure[State of Fisher's equation with $a=0.25$]{
\label{Fig.3.2}
\includegraphics[width=0.3\textwidth]{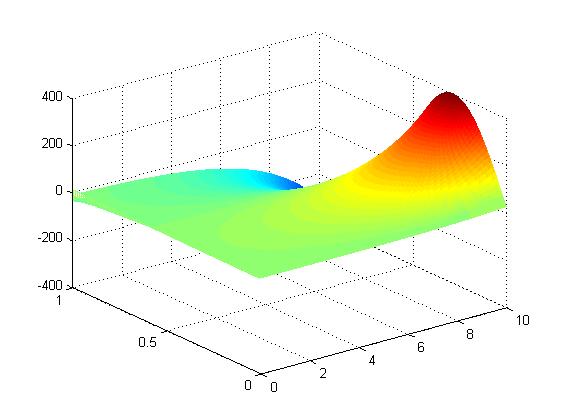}}
\caption{State of Fisher's equation with part information}
\label{Fig.3}
\end{figure}

As we have shown in Theorem \ref{th302}, the feedback controller $U(t)$ can stabilize the solution of Fisher's equation not only in $L^2(0,1)$, but also in $H^1(0,1)$, when the initial data satisfies some compatibility condition. Similarly as above, we take control of the form (\ref{e404}), which only use part of the state information. Simulating the closed-loop system with $b=1$, and with different values for $a$, we find that when $a\leq 0.24$, the solution is exponentially stable in $H^1(0,1)$, which is shown in Figure \ref{Fig.4}.
\begin{figure}[H]
\centering
\subfigure[$H^1$ norm of the state with different $a$]{
\label{Fig.4.1}
\includegraphics[width=0.4\textwidth]{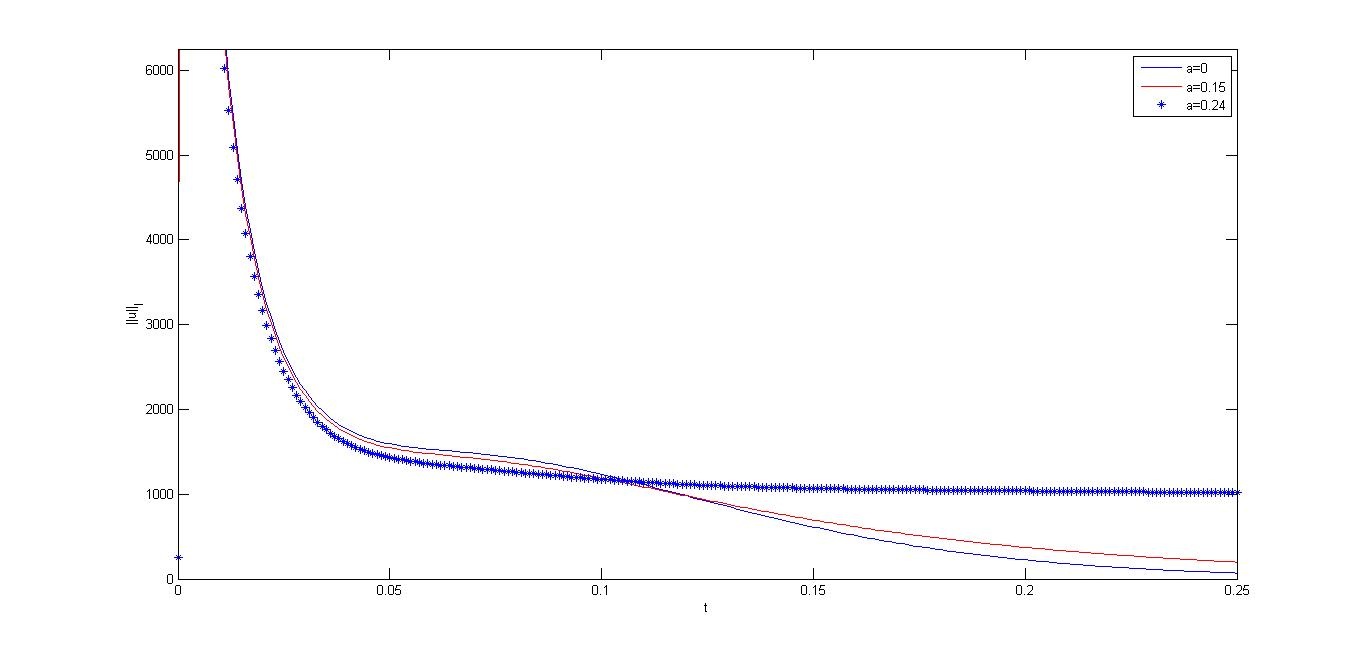}}
\subfigure[$H^1$ norm of the state with $a=0.24$]{
\label{Fig.4.2}
\includegraphics[width=0.4\textwidth]{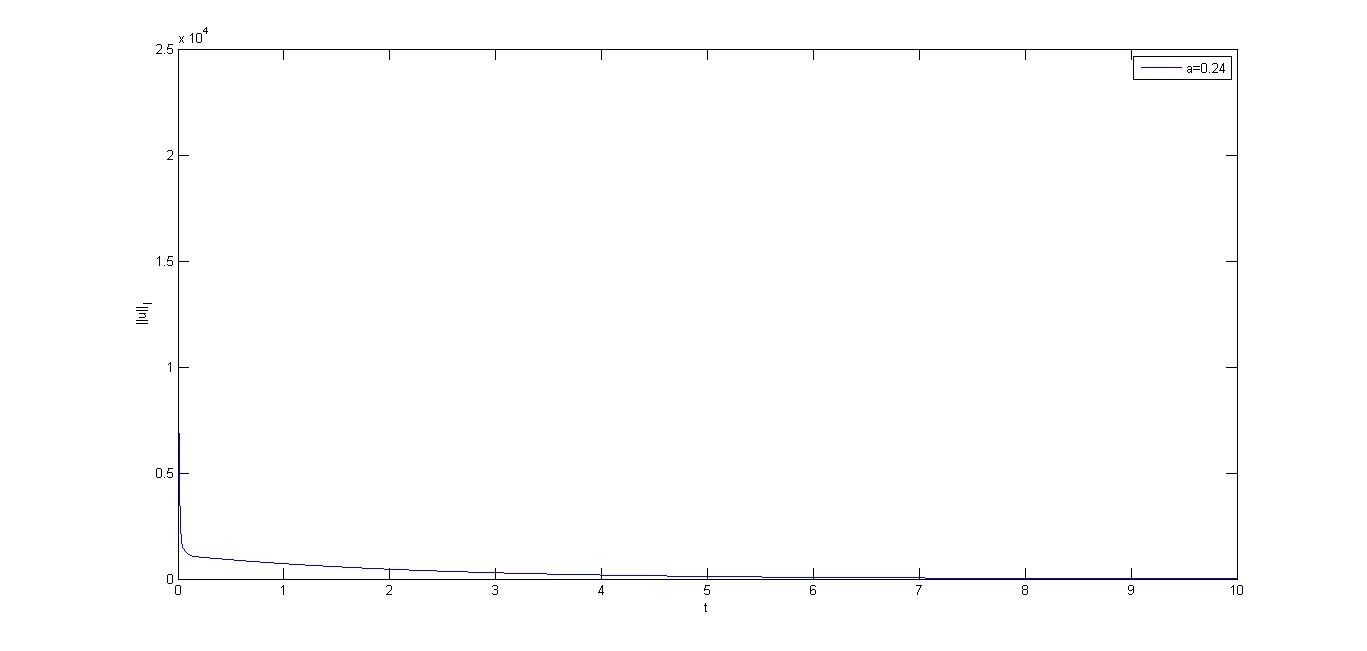}}
\caption{$H^1$ norm of the state of Fisher's equation with part information}
\label{Fig.4}
\end{figure}
It is reasonable that, when we use less information of the state for the feedback control, it is less effective of the feedback controller to stabilize the solution of the closed-loop system. We can see from Figure \ref{Fig.4} that the solution of the closed-loop system decay much slower when $a=0.24$ than the case $a=0$ and $a=0.15$.
\section{Appendix}
\setcounter{equation}{0}
We verify the claim that the sum $B_1+B_2+\cdots+B_N$ is invertible.
\begin{lemma}\label{lm501}
For any $\rho<\gamma_1<\gamma_2<\cdots<\gamma_N$, we have,
\begin{equation}\label{e501}
\left|\begin{array}{ccc}\frac{1}{\gamma_1-\lambda_1}\ \ \  \frac{1}{\gamma_1-\lambda_2}\    \ \ \cdots\ \ \   \frac{1}{\gamma_1-\lambda_N} \\ \frac{1}{\gamma_2-\lambda_1}\  \ \ \frac{1}{\gamma_2-\lambda_2}\   \ \  \cdots\  \ \  \frac{1}{\gamma_2-\lambda_N}\\ \cdots\  \ \ \  \cdots\  \ \ \  \cdots\  \ \ \  \cdots\\ \frac{1}{\gamma_N-\lambda_1}\  \ \ \frac{1}{\gamma_N-\lambda_2}\    \ \ \cdots\   \ \ \frac{1}{\gamma_N-\lambda_N}\end{array}\right|\neq 0.
\end{equation}
\end{lemma}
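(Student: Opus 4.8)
The plan is to observe that the matrix in (\ref{e501}) is a \emph{Cauchy matrix}, with entry $1/(\gamma_i-\lambda_j)$ in position $(i,j)$, and to prove that it is nonsingular by showing that its rows are linearly independent. First I would check that everything is well posed: by (\ref{e105}) we have $\lambda_1<\lambda_2<\cdots<\lambda_N<\rho$ and, by hypothesis, $\rho<\gamma_1<\gamma_2<\cdots<\gamma_N$, so $\gamma_k-\lambda_j\geq\gamma_1-\rho>0$ for all $1\leq j,k\leq N$; in particular the nodes $\gamma_1,\dots,\gamma_N$ are pairwise distinct, and so are $\lambda_1,\dots,\lambda_N$.

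Next I would run the standard partial-fraction/residue argument. Suppose $(c_1,\dots,c_N)\in\mathds{R}^N\setminus\{0\}$ satisfies $\sum_{k=1}^N c_k/(\gamma_k-\lambda_j)=0$ for every $j=1,\dots,N$, and set $f(z)=\sum_{k=1}^N c_k/(\gamma_k-z)=P(z)/\prod_{k=1}^N(\gamma_k-z)$, where $P(z)=\sum_{k=1}^N c_k\prod_{m\neq k}(\gamma_m-z)$ has degree at most $N-1$. Since $P$ vanishes at the $N$ distinct points $\lambda_1,\dots,\lambda_N$, it must be the zero polynomial; evaluating $P$ at $z=\gamma_k$ then gives $c_k\prod_{m\neq k}(\gamma_m-\gamma_k)=0$, whence $c_k=0$ for all $k$ --- a contradiction. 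Hence the rows are independent and the determinant in (\ref{e501}) is nonzero. (Alternatively one can quote the closed form $\det=\prod_{i<j}(\gamma_i-\gamma_j)(\lambda_j-\lambda_i)\big/\prod_{i,j}(\gamma_i-\lambda_j)$, which is visibly nonzero here.)

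Finally I would translate (\ref{e501}) into the invertibility of $B_1+\cdots+B_N$ that is actually used in Section~2. The key point is that $\phi_j'(1)=\pi j(-1)^j\neq 0$, so the Gram matrix $B_0=ww^\top$ is rank one with $w=(\phi_1'(1),\dots,\phi_N'(1))^\top$, and therefore $B_k=\Lambda_{\gamma_k}B_0\Lambda_{\gamma_k}=(\Lambda_{\gamma_k}w)(\Lambda_{\gamma_k}w)^\top$ and $B_1+\cdots+B_N=MM^\top$, where $M$ is the matrix with columns $\Lambda_{\gamma_1}w,\dots,\Lambda_{\gamma_N}w$, i.e. $M=\mathrm{diag}(\phi_1'(1),\dots,\phi_N'(1))\,C^\top$ with $C$ the matrix of (\ref{e501}). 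Then $\det M=\big(\prod_i\phi_i'(1)\big)\det C\neq 0$, so $B_1+\cdots+B_N=MM^\top$ is symmetric positive definite, hence invertible. I expect this last reduction to be the only delicate part: since $B_0$ has rank one, no single $B_k$ is invertible, and it is exactly the Cauchy-determinant condition (\ref{e501}) --- together with the non-vanishing of the $\phi_i'(1)$ --- that rescues the invertibility of the sum.
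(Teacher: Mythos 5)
Your proof is correct. Note, however, that the paper itself does not prove this lemma at all: it simply defers to Lemma 5.1 of reference \cite{09}. Your partial-fraction argument is the standard self-contained proof of exactly this fact: the matrix is a Cauchy matrix in the distinct nodes $\gamma_1<\cdots<\gamma_N$ and $\lambda_1<\cdots<\lambda_N$ (with $\lambda_j<\rho<\gamma_k$, so no denominator vanishes), a vanishing row combination forces the degree-$\le N-1$ polynomial $P(z)=\sum_k c_k\prod_{m\neq k}(\gamma_m-z)$ to have $N$ distinct roots and hence to vanish identically, and evaluation at $z=\gamma_k$ kills each $c_k$. This is presumably the same argument as in \cite{09}, so in substance you are filling in a proof the authors chose to cite rather than reproduce. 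The last third of your proposal --- writing $B_0=ww^\top$ with $w=(\phi_1'(1),\dots,\phi_N'(1))^\top$, hence $B_1+\cdots+B_N=MM^\top$ with $\det M=\bigl(\prod_i\phi_i'(1)\bigr)\det C\neq 0$ --- is not part of Lemma \ref{lm501} but rather reproves Lemma \ref{lm502}; it is equivalent to the paper's contradiction argument (which shows $\sum_k\bigl|\sum_i z_i\frac{1}{\gamma_k-\lambda_i}\phi_i'(1)\bigr|^2=0$ forces $z=0$), though your factorized form makes the positive definiteness of the sum explicit rather than just its invertibility. No gaps.
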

We refer the proof to (\cite{09}, Lemma 5.1).
\begin{lemma}\label{lm502}
The sum $B_1+B_2+\cdots+B_N$ is an invertible matrix, where $B_k, k=1, 2, \cdots, N$ are introduced in (\ref{e205}).
\end{lemma}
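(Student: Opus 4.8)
The plan is to exploit the fact that in the present one-dimensional setting the Gram matrix $B_0$ has rank one. Writing $b:=(\phi_1'(1),\dots,\phi_N'(1))^T$, the very definition of $B_0$ gives $B_0=bb^T$; moreover $\phi_i'(1)=\pi i(-1)^i\neq0$ for every $i$, so $b$ has no vanishing coordinate. Consequently each summand factors as
\[
B_k=\Lambda_{\gamma_k}B_0\Lambda_{\gamma_k}=(\Lambda_{\gamma_k}b)(\Lambda_{\gamma_k}b)^T=c_kc_k^T,\qquad (c_k)_i=\frac{\phi_i'(1)}{\gamma_k-\lambda_i},
\]
which in passing re-confirms that each $B_k$ is symmetric positive semidefinite; note that all denominators $\gamma_k-\lambda_i$ are strictly positive, since $\lambda_i<\rho<\gamma_1\leq\gamma_k$ for $i\leq N$, so the $c_k$ are well defined.

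Next I would collect the vectors $c_1,\dots,c_N$ as the columns of an $N\times N$ matrix $C:=[\,c_1\mid c_2\mid\cdots\mid c_N\,]$, so that
\[
B_1+B_2+\cdots+B_N=\sum_{k=1}^N c_kc_k^T=CC^T .
\]
Hence it suffices to prove that $C$ is nonsingular; equivalently, that $c_1,\dots,c_N$ are linearly independent, or again that $x^T\big(\sum_k B_k\big)x=\sum_{k=1}^N\langle c_k,x\rangle_N^2=0$ forces $x=0$ — which additionally shows that $B_1+\cdots+B_N$ is in fact positive definite.

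To check $\det C\neq0$, factor out the nonvanishing row scalars: one has $C=\mathrm{diag}\big(\phi_1'(1),\dots,\phi_N'(1)\big)\,M^T$, where $M$ is precisely the Cauchy-type matrix $\big(\tfrac{1}{\gamma_k-\lambda_i}\big)_{1\leq k,i\leq N}$ whose determinant appears in Lemma \ref{lm501}. Therefore $\det C=\big(\prod_{i=1}^N\phi_i'(1)\big)\det M$, and both factors are nonzero — the first because each $\phi_i'(1)\neq0$, the second by Lemma \ref{lm501}. Thus $C$ is invertible, and so $B_1+B_2+\cdots+B_N=CC^T$ is invertible (indeed positive definite), which is the assertion.

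I do not expect a genuine analytic obstacle; the content of the argument is the structural reduction together with the non-vanishing of the Cauchy determinant, and the latter has already been isolated as Lemma \ref{lm501} (its proof being a routine induction, or a direct appeal to \cite{09}). I would only emphasize one point in the write-up: the conclusion really uses the explicit form of $B_0$, not merely that each $B_k$ is positive semidefinite — indeed for $N\geq2$ every single $B_k$ is itself singular, and it is the interaction between the rank-one structure of $B_0$ and the spreading by the distinct shifts $\gamma_1<\cdots<\gamma_N$ (encoded by Lemma \ref{lm501}) that restores full rank.
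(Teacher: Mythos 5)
Your proof is correct and follows essentially the same route as the paper's: both arguments reduce invertibility of $B_1+\cdots+B_N$ to the non-vanishing of $\prod_{i=1}^N\phi_i'(1)$ times the Cauchy determinant of Lemma \ref{lm501}, via the rank-one structure $B_k=c_kc_k^T$ with $(c_k)_i=\phi_i'(1)/(\gamma_k-\lambda_i)$. The only cosmetic difference is that you phrase it as a direct factorization $\sum_k B_k=CC^T$ with $\det C\neq 0$, while the paper argues by contradiction from the vanishing of the quadratic form $\sum_k\langle c_k,z\rangle_N^2$.
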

\begin{proof}
Applying by contradiction, let us assume that there is nonzero vector $z=(z_1,z_2,\cdots, z_N)^T\in R^N$, such that
$(B_1+B_2+\cdots+B_N)z=0$. It follows that
$$\sum_{k=1}^N\langle B_kz, z\rangle_N=0,$$
or, equivalently,
$$\sum_{i,k=1}^N[z_i\frac{1}{\gamma_k-\lambda_i}\phi_i'(1)]^2dx=0.$$
We deduce from above that
$$\sum_{i=1}^Nz_i\frac{1}{\gamma_k-\lambda_i}\phi_i'(1)=0, k=1, 2, \cdots, N.$$
For all $i=1,2,\cdots, N$, $\phi_i'(1)=\pi i\cos(\pi i)\neq 0$. The determinant of the matrix of the corresponding system is
$$\prod_{i=1}^N\phi_i'(1)\left|\begin{array}{ccc}\frac{1}{\gamma_1-\lambda_1}\ \ \  \frac{1}{\gamma_1-\lambda_2}\    \ \ \cdots\ \ \   \frac{1}{\gamma_1-\lambda_N} \\ \frac{1}{\gamma_2-\lambda_1}\  \ \ \frac{1}{\gamma_2-\lambda_2}\   \ \  \cdots\  \ \  \frac{1}{\gamma_2-\lambda_N}\\ \cdots\  \ \ \  \cdots\  \ \ \  \cdots\  \ \ \  \cdots\\ \frac{1}{\gamma_N-\lambda_1}\  \ \ \frac{1}{\gamma_N-\lambda_2}\    \ \ \cdots\   \ \ \frac{1}{\gamma_N-\lambda_N}\end{array}\right|\neq 0.$$
\end{proof}
Hence, it is necessary that $z=0$. This is a contradiction with our assumption.





\bibliographystyle{plain}

\begin{thebibliography}{00}

\bibitem{01}\label{01}R. Triggiani. Boundary feedback stabilization of parabolic equations. Applied Mathematic and Optimization, 6(1980), 210-220.
\bibitem{02}\label{02}V. Barbu. Boundary stabilization of equilibrium solutions to parabolic
              equations. IEEE Trans. Automat. Control, 58(9)(2013), 2416--2420.
\bibitem{03}\label{03}V. Barbu. Stabilization of {N}avier-{S}tokes equations by oblique
              boundary feedback controllers. SIAM J. Control Optim., 50(4)(2012), 2288--2307.
\bibitem{04}\label{04}V. Barbu. Stabilization of {N}avier-{S}tokes flows. Springer, London, xii+276, 2011.

\bibitem{05}\label{05}A. V. Fursikov and A. Gorshkov. Certain questions of feedback stablization for Navier-Stokes equations. Evol. Equations Control Theory, 1(1)(2012), 109--140.
\bibitem{06}\label{06}I. Munteanu. Boundary stabilization of the phase field system by
              finite-dimensional feedback controllers. Journal of Mathematical Analysis and Applications, 412(2)(2014), 964--975.
\bibitem{07}\label{07}I. Munteanu. Stabilization of Semilinear Heat Equations, with Fading Memory, by boundary feedbacks. Journal of Differential Equations, 259(2015), 454-472.
\bibitem{08}\label{08}I. Munteanu. Boundary stabilization of the Navier-Stokes Equation with Fading Memory. International Journal of Control, 88(3)(2015), 531-542.
\bibitem{09}\label{09}I. Munteanu. Boundary stabilization of Parabolic nonlinear equations. arXiv:1501.05737, 2015.
\bibitem{10}\label{10}A. Balogh and M. Krstic, Infinite dimensional backstepping-style feedback transformations for the heat equation with arbitrary level of instability. Eur. J. Control, 8(2002), 165-176.
\bibitem{11}\label{11}D. M. Boskovic, M. Krstic, and W. Liu, Boundary control of an unstable heat equation via measurement of domain-averaged temperature. IEEE Trans. Autom. Control, 46(12)(2001), 2028-2038.
\bibitem{12}\label{12}W. J. Liu, Boundary feedback stabilization of an unstable heat equation. SIAM J. Control Optim., 42(3)(2003), 1033-1043.
\bibitem{13}\label{13}X. Yu, C. Xu and J. Chu, Local exponentially stabilization of Fisher's equaiton using the backstepping technique. System \& Control Letters, 74(2014), 1-7.

\bibitem{14}\label{14}R.A. Fisher, The wave of advance of advantageous genes, Ann. Eugenics, 7(1937), 353-369.
\bibitem{15}\label{15}M.J. Ablowitz, A. Zeppetella, Explicit solutions of Fisher¡¯s equation for a special
wave speed, Bull. Math. Biol. 41(1979), 835-840.
\bibitem{16}\label{16} N.F. Britton, Reaction-Diffusion Equations and their Applications to Biology,
Academic Press, 1986.
\bibitem{17}\label{17} L. Debtnath, Nonlinear Partial Differential Equations for Scientist and
Engineers, Birkhauser, Boston, 1997.
\bibitem{18}\label{18} J.D. Murray, Mathematical Biology, Springer-Verlag, 1993.
\bibitem{19}\label{19} V. Barbu, I. Lasiecka and R. Triggiani, Abstract settings for tangential boundary stabilization of Navier-Stokes equations by high- and low-gain feedback controllers. Nonlinear Analysis, 64(2006), 2704-2746.
\end{thebibliography}



\end{document}